\definecolor{darkgreen}{rgb}{0,0.6,0}
\newtheorem{defn}{Definition}
\newtheorem{thm}[defn]{Theorem}
\newtheorem{example}[defn]{Example}
\newtheorem{ass}[defn]{Assumption}
\newtheorem{rem}[defn]{Remark}
\DeclareMathOperator*{\argmin}{arg\,min}
\DeclareMathOperator*{\minimize}{minimize}
\def\eps{\varepsilon}
\newcommand{\Xstraj}[1]{X^s(#1)}
\newcommand{\R}{\mathbb{R}}
\newcommand{\N}{\mathbb{N}}
\newcommand{\K}{\mathcal{K}}
\newcommand{\U}{\mathcal{U}}
\newcommand{\X}{\mathcal{X}}
\newcommand{\XX}{\mathbb{X}}
\newcommand{\W}{\mathcal{W}}
\newcommand{\RR}[1]{\mathcal{R}(#1)}
\newcommand{\Llp}[2][p]{L^{#1}(\Omega,\F,\mathbb{P};#2)}
\newcommand{\F}{\mathcal{F}}
\newcommand{\Prob}{\mathbb{P}}
\newcommand{\E}{\mathbb{E}}
\newcommand{\T}{\mathbb{T}}
\newcommand{\Exp}[1]{\mathbb{E}\left[#1\right]}
\title{\LARGE \bf
Towards turnpike-based performance analysis of risk-averse stochastic predictive control
}
\author{Jonas Schießl, Ruchuan Ou, Michael H. Baumann, Timm Faulwasser, and Lars Grüne
\thanks{
The authors gratefully acknowledge that this work was funded by the Deutsche Forschungsgemeinschaft (DFG, German Research Foundation) – project number 499435839.}
\thanks{Jonas Schießl, Michael H. Baumann and Lars Grüne are with Mathematical Institute, University of Bayreuth, Germany,
        \tt\small \{jonas.schiessl,michael.baumann,lars.gruene\} @uni-bayreuth.de}%
\thanks{Ruchuan Ou and Timm Faulwasser are with the Institute of Control Systems, Hamburg University of Technology, Hamburg, Germany,
        \tt\small ruchuan.ou@tuhh.de, timm.faulwasser@ieee.org}%
}
\begin{document}

\maketitle
\pagestyle{empty}

\begin{abstract}
    In this paper, we present performance estimates for stochastic economic MPC schemes with risk-averse cost formulations. 
    For MPC algorithms with costs given by expectations, it was recently shown that the guaranteed near-optimal performance of abstract MPC in random variables coincides with its implementable variant using pathwise feedback.
    In general, this property does not extend to costs formulated in terms of risk measures.
    However, through a turnpike-based analysis, this paper demonstrates that for a particular class of risk measures, this result can still be leveraged to formulate an implementable risk-averse MPC scheme, resulting in near-optimal averaged performance.
\end{abstract}

\section{Introduction}\label{sec:1}
In optimal control and model predictive control (MPC), risk can be considered in two fundamentally different ways: by relaxing inequality constraints to hold with a given probability or by ensuring that unlikely outcomes do not lead to arbitrary bad performance. While both risk concepts appears in stochastic MPC, this paper is concerned with the latter, i.e., we focus on risk-averse objective transformations in stochastic MPC. 

When formulating stochastic optimal control problems, one cannot simply evaluate the deterministic objective function with random variable arguments as this will give a random variable. Instead, one has to map random variables to real numbers. The most frequently, used approach is optimization in expectation.  However, it is known that risk measures such as, e.g., the averaged value-at-risk, also known as conditional value-at-risk or expected shortfall, are better suited to avoiding rare outcomes with bad performance~\cite{rockafellar2007coherent,shapiro2021lectures}.
Hence, the consideration of risk-aware objective transformations has been addressed in stochastic optimal control~\cite{Guigues2023, Isohatala2021, pichler2023risk, ahmadi2023risk} and has also gained attention in stochastic predictive control.
In particular, \cite{tooranjipour2024risk,yin2023risk,venkatasubramanian2020stochastic,bemporad2011stochastic} employ conditional value-at-risk to take risk into account, \cite{singh2018framework} uses polytopic risk measures, and \cite{sopasakis2019risk} considers coherent measures of risk on a discrete sample space. 

Considering expected value objectives, we have recently extended dissipativity notions for optimal control problems to the stochastic setting and we analyzed different kinds of stochastic turnpike notions and their relation to each other~\cite{Schiessl2023a,Schiessl2024a,Schiessl2025}. The underlying motivation is that turnpike and dissipativity properties play a crucial role in the analysis of deterministic MPC schemes~\cite{grune2022dissipativity}. 
Moreover, in \cite{Schiessl2024c} we have given novel performance estimates for stochastic MPC. Yet, all of our above works do not take into account risk aware objective formulations. 

Hence, in this work, we analyze the performance of risk-aware stochastic MPC. In particular, we derive conditions under which a risk-aware abstract MPC in random variables delivers performance close to the optimal performance of a stationary process. To this end and similar to \cite{BenTal1987,Shapiro2017,Guigues2023} we rely on parametrized risk measures.

The remainder of this paper is structured as follows:
In Section \ref{sec:2} we introduce the considered stochastic problem setting and an illustrative example, while in Section \ref{sec:3} we present our main results on risk aware stochastic MPC. The paper ends with conclusions in Section \ref{sec:conclusions}.

\section{Setting and preliminaries}\label{sec:2}
\subsection{Problem formulation}
    We consider discrete-time stochastic system
    \begin{equation} 
    \label{eq:stochSys}
        X(k+1) = f(X(k),U(k),W(k)), \quad X(0) = X_0.
    \end{equation}
    defined by a continuous function
    \begin{equation*}
        f : \X\times \U \times \W \rightarrow \X, \quad (x,u,w) \mapsto f(x,u,w).
    \end{equation*}
    Here $\X$, $\U$, and $\W$ are Borel spaces and the initial condition $X_0 \in \RR{\Omega,\X}$, the states $X(k) \in \RR{\Omega,\X}$, the controls $U(k) \in \RR{\Omega,\U}$ as well as the noise $W(k) \in \RR{\Omega,\W}$ are random variables on the probability space $(\Omega, \mathcal{F}, \Prob)$ for all $k \in \N_0$, where 
    $\RR{\Omega,\X} := \{X: (\Omega, \mathcal{F},\Prob) \rightarrow \X \mbox{ measurable} \}$.
    Furthermore, $W(k)$ is independent of $X(k)$ and $U(k)$ for all $k \in \N_0$ and the sequence $\{W(k)\}_{k \in \N_0}$ is \emph{i.i.d.} with known distribution $P_W$.
    
    Additionally, we assume that the control process $\mathbf{U} := (U(0),U(1),\ldots)$ is measurable with respect to the natural filtration $(\F_k)_{k \in \N_0}$, i.e.
    \begin{equation} \label{eq:Filtration}
        \sigma(U(k)) \subseteq \F_k := \sigma((X(0),\ldots,X(k)) \subseteq \F
    \end{equation}
    for all $k \in \N_0$. This condition can be seen as a causality requirement, formalizing that we do not use information about the future noise and only the information contained in $X(0),\ldots,X(k)$ about the past noise when deciding about our control values. For more details on stochastic filtrations we refer to \cite{Fristedt1997, Protter2005}. 
    
    We call a control sequence that satisfies \eqref{eq:Filtration} admissible and denote the set of all admissible control sequences for the initial value $X_0$ on horizon $N \in \N \cup \{\infty\}$ by $\U^N(X_0)$.
    For a given initial value $X_0$ and control sequence $\mathbf{U}$, we denote the solution of system \eqref{eq:stochSys} by $X_{\mathbf{U}}(\cdot,X_0)$, or short by $X(\cdot)$ if the initial value and the control are unambiguous. 
    Note, that the solution $X_{\mathbf{U}}(\cdot,X_0)$ also depends on the disturbance $\mathbf{W}$. 
    However, for the sake of readability, we do not highlight this in our notation and assume in the following that $\mathbf{W} := (W(0),W(1),\ldots)$ is an arbitrary but fixed stochastic process.

    Moving from the stochastic dynamics \eqref{eq:stochSys} to  optimal control problem, we consider the stage cost 
    \[\ell(X,U) := \T[g(X,U)]\] 
    where $g: \X \times \U \rightarrow \R$ is a continuous function bounded from below and $\T : \mathcal{Z} \rightarrow \R$ is a mapping of random variables from a linear space $\mathcal{Z}$ to the real values.
    
    Then, the stochastic optimal control problem under consideration reads
    \begin{equation} 
    \label{eq:stochOCP}
        \begin{split}
            \minimize_{\mathbf{U} \in \U^{N}(X_0)} &J_N(X_0,\mathbf{U}) := \sum_{k=0}^{N-1} \ell(X(k),U(k)) \\
            s.t. ~ X(k+1) &= f(X(k),U(k),W(k)), ~ X(0) = X_0.
        \end{split}
    \end{equation}
    By $V_N(X_0) := \inf_{\mathbf{U} \in \U^{N}(X_0)} J_N(X_0,\mathbf{U})$ we denote the optimal value function of the optimal control problem \eqref{eq:stochOCP} and if a minimizer of this problem exists we will denote it by $\mathbf{U}^*_N$ or $\mathbf{U}^*_{N,X_0}$ if we want to emphasize the dependence on the initial condition. 

    Moreover, in order to guarantee well-posedness of problem \eqref{eq:stochOCP} and finiteness of the optimal value function $V_N(X_0)$ for finite $N \in \N$, we assume the initial values $X_0$ --- and thus the whole optimal trajectory $X_{\mathbf{U}^*_{N}}(\cdot,X_0)$ --- to lie in the constraint set
    \begin{equation*}
    \begin{split}
        \mathbb{X} := \{ X &\in \RR{\Omega, \X} \mid ~\exists~ U \in \RR{\Omega, \X} : \\
        &\vert \ell(X,U) \vert < \infty, ~f(X,U,W) \in \mathbb{X}\} \subseteq \RR{\Omega, \X}.
    \end{split}
    \end{equation*}
    For instance, if we consider the generalized linear-quadratic problem from \cite{Schiessl2025} with $W(k) \in \Llp[2]{\W}$ we get $\mathbb{X} = \Llp[2]{\X}$.

    \begin{rem}
        Problem~\eqref{eq:stochOCP} can equivalently be formulated as a Markov decision problem (MDP) in terms of feedback policies and distributions rather than random variables.
        While optimizing over policies instead of control sequences yields no essential difference, a distribution-based formulation will prevent us from making statements about the realization paths of the solutions, cf.\ \cite{Schiessl2024a}.
        To clearly distinguish our approach from the MDP perspective, we therefore conduct the analysis in terms of control sequences.
    \end{rem}

\subsection{Fundamentals of risk-measures}
    A common choice for the mapping $\mathbb{T}[Z]$ in \eqref{eq:stochOCP} is the expected value $\E[Z]$. 
    However, as risk measures are widely used as optimization criteria in various risk-averse applications (see the references in the introduction), in this paper we will consider a class of risk measures as the mapping $\mathbb{T}[Z]$ in \eqref{eq:stochOCP}.

    \begin{defn}[Risk measures] \label{defn:riskMeasure}
        Let $\mathcal{Z}$ be a linear space of measurable functions $Z: (\Omega,\mathcal{F},\mathbb{P}) \rightarrow \R$, where $(\Omega,\mathcal{F},\mathbb{P})$ is a probability space.
        A function $\rho: \mathcal{Z} \rightarrow \R \cup \{\infty\}$ is called risk measure if it is
        \begin{enumerate}[(i)]
            \item \emph{monotone}, i.e.\ for all $Z_1, Z_2 \in \mathcal{Z}$ it holds that $Z_1 \geq Z_2 \Rightarrow \rho(Z_1) \geq \rho(Z_2)$ 
            \item \emph{translative}, i.e.\ for all $Z \in \mathcal{Z}$ and $m \in \R$ it holds that $\rho(Z + m) = \rho(Z) + m$ 
        \end{enumerate}
    \end{defn}

    \begin{rem}
        In our definition of a risk measure $\rho$ we associate high values of the random variable $Z$ with a high risk $\rho(Z)$. While  in insurance mathematics the same concept is used, in financial mathematics the common choice would be to associate low values of $Z$ with a high risk $\tilde{\rho}(Z)$. However, the two definitions are easily transformed into each other as $\rho(Z) = \tilde{\rho}(-Z)$.
    \end{rem}

    While an arbitrary risk measure only has to satisfy the conditions of Definition~\ref{defn:coherentRiskMeasure}, there are several additional properties for risk measures, which can be useful under certain circumstances.
    Especially, law-invariance will be necessary for our later investigations.
    
    \begin{defn}\label{defn:coherentRiskMeasure}
        A risk measure $\rho: \mathcal{Z} \rightarrow \R \cup \{\infty\}$ satisfying  Definition~\ref{defn:riskMeasure} is called
        \begin{enumerate}[(i)]
            \item \emph{convex} if $\rho(\beta Z_1 + (1-\beta) Z_2) \leq \beta \rho(Z) + (1-\beta) \rho(Z)$ for all $Z_1,Z_2 \in \mathcal{Z}$, $\beta \in (0,1)$;
            \item \emph{positive homogeneous} if $\rho(\beta Z) = \beta \rho(Z)$ for all $Z \in \mathcal{Z}$ and $\beta \geq 0$;
            \item \emph{coherent} if it is convex and positive homogeneous;
            \item \emph{law-invariant} if $\rho(Z_1) = \rho(Z_2)$ holds for all $Z_1,Z_2 \in \mathcal{Z}$ with $Z_1 \sim Z_2$.
        \end{enumerate}
    \end{defn}

    For coherent risk measures, the robust representation theorem, cf.\ \cite{Follmer2011}, can be used to represent them in terms of expectations as
    \begin{equation} \label{eq:robustRep}
        \rho(Z) = \sup_{\mathbb{Q} \in \mathcal{Q}} \E^{\mathbb{Q}}[Z]
    \end{equation}
    for some set $\mathcal{Q} \subseteq \mathcal{M}(\Prob) := \{ \mathbb{Q} \ll \Prob \}$ of probability measures which are absolutely continuous to $\mathbb{P}$.
    However, using this representation can be challenging in applications since one has to take the supremum over probability measures. 
    Thus, for our approach, we consider another special class of risk measures which have been considered in \cite{BenTal1987, Shapiro2017} and \cite{Guigues2023} in an optimal control contexts and which can be written in  parametric form.

    \begin{defn}[parameterized risk measures] \label{defn:riskMeasureParametric}
        A risk measure $\rho: \mathcal{Z} \rightarrow \R \cup \{\infty\}$ according to Definition~\ref{defn:riskMeasure} is called \emph{parameterized} if there exists a set $\Theta \subset \R^q$, $q \in \N$ and a function $\Psi: \R \times \Theta \rightarrow \R$ such that 
        \begin{equation} \label{eq:parametricForm}
            \rho(Z) = \inf_{\theta \in \Theta} \E[\Psi(Z,\theta)].
        \end{equation}
    \end{defn}

    Note that for every proper, closed and monotone non-decreasing function $\psi: \R \rightarrow \R \cup \{\infty\}$, the function
    \begin{equation} \label{eq:simpleParametricRiskMeasure}
        \rho(Z) := \inf_{\theta \in \R} \E[\theta + \psi(Z - \theta)]
    \end{equation}
    defines a risk measure according to Definition~\ref{defn:riskMeasure} given in parametric form \eqref{eq:parametricForm}. 
    Moreover, if $\psi$ is additionally convex the risk measure \eqref{eq:simpleParametricRiskMeasure} is also convex. 
    However, it is in general not positive homogeneous, and thus, not coherent.
    Several risk measures that are used in practice can be represented in the parametric form \eqref{eq:parametricForm}, as shown in the following examples.

    \begin{example}[Averaged value-at-risk] \label{example:AV@R}
        The averaged value-at-risk (also called expected shortfall or conditional value-at-risk, cf.\ Remark~\ref{rem:riskMeasures}~(i)) with confidence-level $\alpha \in (0,1]$ is a coherent risk measure and can be parametrized as
        \begin{equation} \label{eq:AV@R}
            \rho(Z) = \inf_{\theta \in \R}\E \left[ \theta + \alpha^{-1} \max\{ 0,Z-\theta\} \right].
        \end{equation}
    \end{example}

    \begin{example}[$\phi$-divergence risk measures] \label{example:phiDivergence}
         Let $\mathcal{D}(\Omega) := \{ \xi: \Omega \rightarrow \R^+_0 \mid \int_{\Omega} \xi d\mathbb{P} = 1 \} \subseteq \Llp[1]{\R}$ be the space of probability density functions on $\Omega$ and define the $\phi$-divergence ambiguity set
        \begin{equation} 
            \mathcal{A} := \left\{ \xi \in \mathcal{D}(\Omega): \int_{\Omega} \phi(\xi(\omega)) d\mathbb{P}(\omega) \leq c \right\}, \quad c \geq 0,
        \end{equation}
        where $\phi: \R \rightarrow \R^+_0 \cup \{+\infty\}$ is a convex lower semicontinuous function with $\phi(1)=0$ and $\phi(z) = +\infty$ for $z < 0$. Then, the coherent risk measure corresponding to $\mathcal{A}$ with constraint-level $c \geq 0$ is given by $\rho(Z) = \sup_{\xi \in \mathcal{A}} \int_{\Omega} \xi Z d\mathbb{P}$ and by duality arguments this risk measure can be written in parametric form \eqref{eq:parametricForm} as
        \begin{equation}
            \rho(Z) = \inf_{\theta \in \R^+ \times \R}\E \left[ \theta_1 c + \theta_2 + \theta_1 \phi^*\left(\frac{Z-\theta_2}{\theta_1}\right) \right],
        \end{equation}
        where $\phi^*$ is the Legendre-Fenchel conjugate of $\phi$. 
        In particular, by choosing $\phi$ as the Kullback-Leibler-divergence $\phi(z) := z \ln(z) - z +1$ for $z \geq 0$ and $\phi(x) := +\infty$ for $z < 0$ we obtain the risk measure
        \begin{equation} \label{eq:risk_KL}
            \rho(Z) = \inf_{\theta \in \R^+ \times \R}\E \left[ \theta_1 c + \theta_2 + \theta_1 e^{\left(\frac{Z-\theta_2}{\theta_1}\right)} - \theta_1 \right].
        \end{equation}
        For more details on this we refer to \cite{Shapiro2017,BenTal1987}.
    \end{example}

    \begin{rem} \label{rem:riskMeasures}
        \begin{enumerate}[(i)]
            \item While expected shortfall is used as a synonym for averaged value-at-risk, the term conditional value-at-risk is used ambiguously in the literature. Depending on the reference, it is either used again as a synonym for averaged value-at-risk, cf.\ \cite{rockafellar2007coherent}, or as another name for tail conditional expectation. While these two concepts coincide for continuous random variables, this is in general not the case for discrete ones, cf.\ \cite{acerbi2002spectral}.
            \item By choosing $\psi(z) = z$ in \eqref{eq:simpleParametricRiskMeasure} it follows that $\rho(Z) = \Exp{Z}$ is a parametrizeable risk measure in the sense of Definition~\ref{defn:riskMeasureParametric}, and thus, is also included in our considerations.
        \end{enumerate}
    \end{rem}
    
\subsection{Turnpike properties in stochastic optimal control}
    Our analysis in Section~\ref{sec:3} will be based on recently developed turnpike properties for stochastic optimal control problems, see \cite{Schiessl2023a,Schiessl2024a,Schiessl2025}, which we recall in the following.
    While in deterministic settings the turnpike is usually an equilibrium of the system, in stochastic settings this is not possible due to the persistent excitation of the system~\eqref{eq:stochSys} by the noise. 
    Thus, we make the following definition of a stationary solution of system~\eqref{eq:stochOCP}.

    \begin{defn}[Stationary stochastic processes] \label{defn:stationaryProcess}
        A pair of the stochastic processes $(\mathbf{X}^s,\mathbf{U}^s)$ given by
        \begin{equation} \label{eq:sys_stat}
            \Xstraj{k+1} = f(\Xstraj{k}, U^s(k), W(k))
        \end{equation}
        with $\mathbf{U}^s \in \U^{\infty}(X^s(0))$ is called stationary for system \eqref{eq:stochSys} if there exist probability distributions $P^s_X$, $P^s_U$, and $P^s_{X,U}$ with
        \begin{equation*}
        \begin{split}
            \Xstraj{k} \sim P^s_X, \quad U(k) \sim P^s_U, \quad (\Xstraj{k},U^s(k)) \sim P^s_{X,U}
        \end{split}
        \end{equation*}
        for all $k \in \N_0$. 
    \end{defn}

    The next definition formalizes the turnpike property of stochastic optimal control problem~\eqref{eq:stochOCP} where the turnpike is defined as a stationary solution from Definition~\ref{defn:stationaryProcess}. Note that this definition is equivalent to the definition used in \cite{Schiessl2024a}, if we set $\XX_0 = \{X_0\}$ and considering only optimal trajectories instead of near stationary solutions.

    \begin{defn}[Random variable turnpike] \label{defn:stochTurnpike}
        Consider a stationary pair $(\mathbf{X}^s,\mathbf{U}^s)$, a pseudometric $d$ on the space $\RR{\Omega,\X}$ and a set $\tilde{\XX} \subseteq \XX$. 
        We say that the optimal control problem \eqref{eq:stochOCP} has
        \begin{enumerate}[(i)]
            \item a stochastic finite horizon turnpike property on $\tilde{\XX}$ if there exists $\vartheta \in \mathcal{L}$ such that for each optimal trajectory $X_{\mathbf{U}^*_{N}}(\cdot,X_0)$ with $X_0 \in \tilde{\XX}$ and all $N,L \in \N$ there is a set $\mathcal{Q}(X_0,L,N) \subseteq \{0,\ldots,N\}$ with $\# \mathcal{Q}(X_0,L,N) \leq L$ elements and 
            \begin{equation*}
                d \left( X_{\mathbf{U}^*_{N}}(k,X_0) , X^s(k) \right) \leq \vartheta(L)
            \end{equation*}
            for all $k \in \{0,\ldots,N\} \setminus \mathcal{Q}(X_0,L,N)$;
            \item a stochastic infinite horizon turnpike property on $\tilde{\XX}$ if there exists $\vartheta_{\infty} \in \mathcal{L}$ and $r \in \R^+$ such that for each optimal trajectory $X_{\mathbf{U}^*_{\infty}}(\cdot,X_0)$ with $X_0 \in \tilde{\XX}$ and all $L \in \N$ there is a set $\mathcal{Q}(X_0,L,\infty) \subseteq \N_0$ with $\# \mathcal{Q}(X_0,L,\infty) \leq L$ elements and 
            \begin{equation*}
                d \left( X_{\mathbf{U}^*_{\infty}}(k,X_0) , X^s(k) \right) \leq \vartheta_{\infty}(L)
            \end{equation*}
            for all $k \in \N_0 \setminus \mathcal{Q}(X_0,L,\infty)$.
        \end{enumerate}
    \end{defn}

    \begin{defn}[Types of stochastic turnpike properties] \label{defn:stochTurnpikeTypes}
        Assume that the optimal control problem has a stochastic turnpike property according to definition \ref{defn:stochTurnpike}. Then we say that it has 
        \begin{enumerate}[(i)]
            \item the \emph{$L^r$ turnpike property} if $d$ is the $L^r$-norm, i.e.\ $$d(X,Y) = \Vert X - Y \Vert_{L^r} := (\Exp{\Vert X - Y \Vert^r})^{1/r}.$$
            \item the \emph{pathwise-in-probability turnpike property} if $d$ is the Ky-Fan metric, i.e.\ $$d(X,Y) = d_{KF}(X,Y) := \inf_{\eps > 0} \{\Prob(\Vert X - Y \Vert > \eps) \leq \eps \}.$$
            \item the \emph{distributional turnpike property} if $d$ is a metric on the space of probability measures $\mathcal{P}(\X)$, or more precisely
            \begin{enumerate}[(a)]
                \item the \emph{Wasserstein turnpike property of order $r$} if $d$ is the Wasserstein distance of order $r$, i.e\ $$d(X,Y) = \inf \{ \Vert \bar{X} - \bar{Y} \Vert_{L^r} \mid \bar{X} \sim X, \bar{Y} \sim Y \}.$$
                \item the \emph{weak distributional turnpike property} if $d$ is the Lévy-Prokhorov metric, i.e.\ $$d(X,Y) = \inf \{ d_{KF}(\bar{X}, \bar{Y}) \mid \bar{X} \sim X, \bar{Y} \sim Y \}.$$
            \end{enumerate}
            \item the \emph{$r$-th moment turnpike property} if $$d(X,Y) = \left\vert \Exp{\Vert X \Vert^r}^{1/r} - \Exp{\Vert Y \Vert^r}^{1/r} \right\vert.$$
        \end{enumerate}
    \end{defn}

    \begin{example} \label{ex:example1}
        Consider the stochastic optimal control problem 
        \begin{equation} \label{eq:example1}
            \begin{split}
                \minimize_{\mathbf{U} \in \U^{N}(X_0)} J_N(X_0,\mathbf{U}) &:= \sum_{k=0}^{N-1} \mathbb{T}[X(k)^2 + 5 U(k)^2] \\
                s.t. ~ X(k+1) &= 1.5 X(k) + U(k) + W(k)
            \end{split}
        \end{equation}
        where $W(k)$ follows a two-point distribution such that $W(k) = a := 0.6$ with probability $p_a = 0.5$ and $W(k) = b := -0.6$ with probability $p_b = 0.5$.
        For $\mathbb{T}[Z] = \E[Z]$ this corresponds to a standard stochastic linear-quadratic problem and for this problem it was already shown analytically and numerically that the problem exhibits stochastic turnpike properties in various forms, cf.\ \cite{Schiessl2024c, Ou2021}. 
        The optimal trajectories of problem \eqref{eq:example1} with $\mathbb{T}[Z] = \E[Z]$ for horizons $N=3,5,7,9,13$ and $X_0=1.5$ are shown in the left column of Figure~\ref{fig:example1_turnpike}.
        If, instead, we consider a risk measure as the mapping $\mathbb{T}[Z]$, then we can still observe stochastic turnpike properties as shown in the right column of Figure~\ref{fig:example1_turnpike} for the averaged value-at-risk from equation \eqref{eq:AV@R} of Example~\ref{example:AV@R} with confidence-level $\alpha=0.05$.
        Here, for numerical purpose we used the softplus function $\log(1+e^z)$ in the simulations instead of the function $\max\{0,z\}$ in \eqref{eq:AV@R}. 
        Note that usage of the softplus leads to an approximation of the averaged value-at-risk but can also be interpreted as a risk measure on its own sake due to equation \eqref{eq:simpleParametricRiskMeasure} with $\psi(z) = \log(1+e^z)$.
        The results from Figure~\ref{fig:example1_turnpike} show that although we can observe the same qualitative behavior for the expectation and the risk measure used in the cost, the overall width of the possible paths for the state trajectories is smaller if we use the risk-averse formulation rather than the expectation. 
        \begin{figure}[t]
            \centering
            \includegraphics[width=0.33\textwidth]{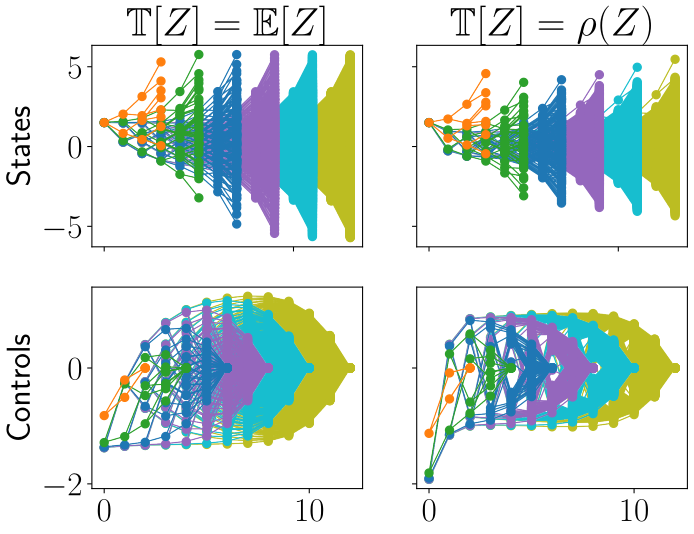}
            \caption{Optimal state and control trajectories on horizons $N=3,5,7,9,11,13$ for $X_0=1.5$ and $\mathbb{T}[Z] = \rho(Z)$ from equation \eqref{eq:AV@R} (right) as well as $\mathbb{T}[Z] = \E[Z]$ (left).}
            \label{fig:example1_turnpike}
        \end{figure}
    \end{example}
    \begin{example} \label{ex:example2}
        Consider the stochastic optimal control problem 
        \begin{equation} \label{eq:example2}
            \begin{split}
                \minimize_{\mathbf{U} \in \U^{N}(X_0)} J_N(X_0,\mathbf{U}) &:= \sum_{k=0}^{N-1} \mathbb{T}[X(k)^2 + \gamma U(k)^2] \\
                s.t. ~ X(k+1) &= (U(k)-X(k))^2 + W(k)
            \end{split}
        \end{equation}
        where $\gamma \geq 0$ is a regularization parameter and $W(k)$ follows a two-point distribution such that $W(k) = a := 1$ with probability $p_a = 0.7$ and $W(k) = b := 0.25$ with probability $p_b = 0.3$.
        For $\mathbb{T}[X] = \E[X]$ this is a slight modification of the example in Section V of \cite{Schiessl2024c}, where it was shown numerically that the problem exhibits stochastic turnpike phenomena for $\gamma > 0$.
        Moreover, for $\gamma = 0$ it was shown analytically in \cite{Schiessl2024a} that the problem has all the stochastic turnpike properties in Definition~\ref{defn:stochTurnpikeTypes}.
        For $\mathbb{T}[Z] = \E[Z]$ and $\mathbb{T}[Z] = \rho(Z)$, where $\rho(Z)$ is the Kullback-Leibler divergence from equation \eqref{eq:risk_KL} of Example~\ref{example:phiDivergence} with constraint level $c=0.5$, the state and control trajectories on horizons $N=3,5,7,9,11$ with $\gamma=15$ and $X_0=1.5$ are shown in Figure~\ref{fig:example2_turnpike}.
        We can again observe stochastic turnpike properties if we chose a risk measure as the mapping $\mathbb{T}[Z]$ in \eqref{eq:example2}.
        Moreover, we can observe that the qualitative behavior of the solutions remains the same for $\mathbb{T}[Z]=\E[X]$ or $\mathbb{T}[Z] = \rho(Z)$.
        However, the usage of the risk measure in the costs seems to lead to a consolidation of the paths such that high values for the states are less likely.
        \begin{figure}[t]
            \centering
            \includegraphics[width=0.33\textwidth]{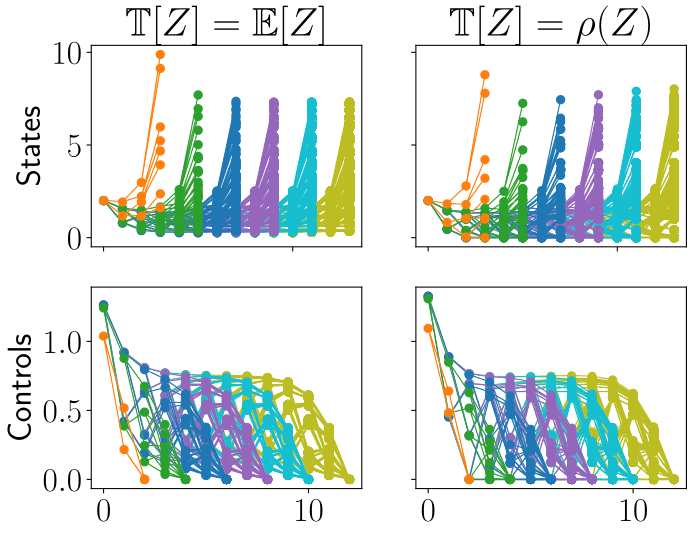}
            \caption{Optimal state and control trajectories on horizons $N=3,5,7,9,11,13$ for $X_0=1.5$ and $\mathbb{T}[Z] = \rho(Z)$ from equation \eqref{eq:risk_KL} (right) as well as $\mathbb{T}[Z] = \E[Z]$ (left).}
            \label{fig:example2_turnpike}
        \end{figure}
    \end{example}
    
\section{Risk-averse stochastic MPC}\label{sec:3}
    In this section, we aim to derive a risk-averse stochastic MPC algorithm that can also be implemented using measurements from a real plant. To this end, we start with Algorithm~\ref{alg:stochMPCabstract}, which implements a deterministic MPC algorithm for the stochastic problem formulated as a deterministic problem on the space of random variables $\RR{\Omega,\X}$.

    \begin{algorithm}
        \caption{Abstract stochastic MPC algorithm}\label{alg:stochMPCabstract}
        \begin{algorithmic}
            \For{$j=0,\ldots,K$}
                \State 1.) Set $X_0 = X(j)$.
                \State 2.) Solve the stochastic optimal control problem \eqref{eq:stochOCP}. 
                \State 3.) Apply the MPC feedback $\mu_N(X(j)) := U^*_{N,X_0}(0)$
                \State ~~~ to system \eqref{eq:stochSys} and get the next state $X(j+1)$.
            \EndFor
        \end{algorithmic}
    \end{algorithm}
 Algorithm~\ref{alg:stochMPCabstract} is useful for theoretical analysis, since we can transfer the proof ideas from the deterministic setting to the stochastic one. Yet, it requires knowledge of the complete random variables, or at least their distributions.
    However, in an implementation with a real plant, one cannot measure the random variable $X(j)$, but only a realization $x_j = X(j, \omega)$ thereof, i.e.\ a step of a single path drawn randomly from all possible paths. 
    Thus, an implementable version of Algorithm~\ref{alg:stochMPCabstract}, which is given by Algorithm~\ref{alg:stochMPCimpl}, would use the measured realization as initialization in each iteration instead of the random variable.

    \begin{algorithm}
        \caption{Implementable stochastic MPC algorithm}\label{alg:stochMPCimpl}
        \begin{algorithmic}
            \For{$j=0,\ldots,K$}
                \State 1.) Measure the state $x_j := X(j,\omega)$ and set $X_0\equiv x_j$.
                \State 2.) Solve the stochastic optimal control problem \eqref{eq:stochOCP}. 
                \State 3.) Apply the MPC feedback $\mu_N(x_j) := U^*_{N,x_j}(0)$
                \State ~~~ to system \eqref{eq:stochSys}.
            \EndFor
        \end{algorithmic}
    \end{algorithm}

    The following Theorem, which is \cite[Corollary~7]{Schiessl2024c}, shows that the closed-loop performance
    \[ J^{cl}_K(X_0, \mu) := \sum_{k=0}^{K-1} \ell(X_{\mu}(k,X_0),\mu(X_{\mu}(k,X_0))), \quad K \in \N \]
    coincides for the two algorithms if one uses the expected value as the mapping $\mathbb{T}$ in \eqref{eq:stochOCP}.

    \begin{thm} \label{thm:MPCalg}
        Consider $N,j,K \in \N$ and assume that the stage cost is defined as $\ell(X,U) = \E[h(X,U)]$ for some continuous function ${h: \X \times \U \rightarrow \R}$, which is bounded from below. 
        Let $\mu_N^2$ be a measurable feedback law from Algorithm~\ref{alg:stochMPCimpl} for $x_j=X(j,\omega)$. 
        Then $\mu_N^2$ coincides $P_{X(j)}$-almost surely with a feedback law $\mu_N^1$ from Algorithm~\ref{alg:stochMPCabstract} and the identity
        $ J^{cl}_K(X_0, \mu_N^1) =  J^{cl}_K(X_0, \mu_N^2)$
        holds for all $X_0 \in \XX$.  
    \end{thm}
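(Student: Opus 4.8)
The plan is to exploit the special structure of the expected-value stage cost $\ell(X,U) = \E[h(X,U)]$, which causes the abstract problem \eqref{eq:stochOCP} to decouple across the realizations of the initial condition. First I would use linearity of the expectation to rewrite the objective as
\[ J_N(X_0, \mathbf{U}) = \E\left[\sum_{k=0}^{N-1} h(X(k), U(k))\right], \]
and then apply the tower property to condition on $X_0$, giving $J_N(X_0,\mathbf{U}) = \E[\E[\sum_{k=0}^{N-1} h(X(k),U(k)) \mid X_0]]$. Because the noise $\{W(k)\}_{k\in\N_0}$ is i.i.d.\ and independent of the current state and control, and the dynamics $f$ are Markovian, the inner conditional expectation given $X_0 = x_0$ depends only on the realization $x_0$ and coincides with the objective of the deterministic-initial-condition problem started at $x_0$ --- precisely the problem solved in step 2 of Algorithm~\ref{alg:stochMPCimpl}.

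Next I would argue that minimizing the outer expectation is achieved by minimizing the inner conditional cost pointwise for $P_{X_0}$-almost every realization $x_0$. The admissibility constraint \eqref{eq:Filtration} is essential here: since $U(0)$ is $\sigma(X_0)$-measurable, choosing the initial control amounts to choosing a measurable function of $x_0$, and there is no coupling between distinct realizations of the kind a non-decomposable cost would create. Writing $v_N(x_0)$ for the optimal value of the realization problem, this yields $V_N(X_0) = \E[v_N(X_0)]$ and, more importantly, identifies the abstract optimal first control $U^*_{N,X_0}(0)$, viewed as a function of the realization of $X_0$, with the realization-optimal control $U^*_{N,x_0}(0)$ for $P_{X_0}$-almost every $x_0$.

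Applying this identification with $X_0 = X(j)$ shows that $\mu_N^1$ and $\mu_N^2$ coincide $P_{X(j)}$-almost surely. Since the closed loops in both algorithms are driven by the same dynamics \eqref{eq:stochSys} and by feedback laws that agree almost surely, the resulting closed-loop trajectories agree almost surely; as the running stage costs are expectations of $h$, they coincide and we obtain $J^{cl}_K(X_0, \mu_N^1) = J^{cl}_K(X_0, \mu_N^2)$ for all $X_0 \in \XX$.

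The step I expect to be the main obstacle is the measurable selection: assembling the family of pointwise-optimal first controls $\{U^*_{N,x_0}(0)\}_{x_0}$ into a single measurable and admissible feedback law for the abstract problem. This calls for a measurable selection theorem of Kuratowski--Ryll-Nardzewski type applied to the set-valued map $x_0 \mapsto \argmin$, where continuity of $f$ and $h$ together with the lower bound on $h$ guarantee that the value function and the minimizer set are well-behaved. Once this selection is secured, the remaining linearity and tower-property bookkeeping is routine; it is exactly this decomposition that fails for general risk measures $\rho$, since no analogue of the tower property lets one decouple $\rho[h(X,U)]$ across realizations, which motivates the turnpike-based approach developed in the sequel.
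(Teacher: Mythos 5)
Your proposal is correct and follows essentially the same route as the paper's proof: the paper states Theorem~\ref{thm:MPCalg} as \cite[Corollary~7]{Schiessl2024c}, whose argument rests precisely on the tower-property decomposition of the expected-value cost across realizations of the initial condition (enabled by the i.i.d.\ noise and the adaptedness condition \eqref{eq:Filtration}), together with the measurability of the pointwise-optimal feedback --- exactly the ingredients you assemble. The paper itself confirms this is the key mechanism when it remarks that the tower property of conditional expectations is ``crucial for proving Theorem~\ref{thm:MPCalg}'' and is what breaks down for general risk measures $\mathbb{T}$, matching your closing observation.
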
 

    Unfortunately, if we choose a mapping $\mathbb{T}$ in \eqref{eq:stochOCP} that is different from the expected value, the result of Theorem~\ref{thm:MPCalg} is, in general, no longer valid. 
    The reason for this is that a general map $\mathbb{T}$ will in general not fulfill the tower property of conditional expectations, which is crucial for proving Theorem~\ref{thm:MPCalg}.
    Therefore, although for general $\mathbb{T}$ we can still analyze the performance of the abstract Algorithm~\ref{alg:stochMPCabstract}, we cannot transfer these performance bounds to the implementable Algorithm~\ref{alg:stochMPCimpl}.
    However, if we consider a parameterized risk measure, cf.\ Definition~\ref{defn:riskMeasureParametric}, we can fix the parameter $\theta \in \Theta$ and consider the stage costs  $\ell_{\theta}(X,U) := \Exp{\Psi(g(X,U),\theta)}$, for which by optimality we get 
    \begin{equation*}
        \ell_{\theta}(X,U) \geq \inf_{\theta \in \Theta} \Exp{\Psi(g(X,U),\theta)} = \ell(X,U)
    \end{equation*}
    for all $\theta \in \Theta$ and to which we can apply Theorem~\ref{thm:MPCalg}. To formalize this, we make the following assumption.
    
    \begin{ass} \label{ass:mappingT}
    \begin{enumerate}[(i)]
        \item The mapping $\mathbb{T}$ from \eqref{eq:stochOCP} is a law-invariant parameterized risk measure with $\mathbb{T}[Z] = \inf_{\theta \in \Theta}\E[\Psi(Z,\theta)]$ and $\Psi(\cdot,\theta)$ bounded from below for all $\theta \in \Theta$.
        \item The function $\Psi$ satisfies the following uniform continuity assumption: for all $\theta_1 \in \Theta$ there exists $\alpha \in \K_{\infty}$ such that
        $\vert \Psi(z,\theta_1) - \Psi(z,\theta_2) \vert \leq \alpha( \Vert \theta_1 - \theta_2 \Vert)$
        holds for all $z \in \R$ and $\theta_2 \in \Theta$.
    \end{enumerate}
    \end{ass}
    
    Under Assumption~\ref{ass:mappingT} we can replace the original optimal control problem \eqref{eq:stochOCP} by the optimal control problem 
    \begin{equation} 
    \label{eq:stochOCPrisksensitive}
        \begin{split}
            \minimize_{\mathbf{U} \in \U^{N}(X_0)} &J_N(X_0,\mathbf{U}) := \sum_{k=0}^{N-1} \ell_{\theta}(X(k),U(k)) \\
            s.t. ~ X(k+1) &= f(X(k),U(k),W(k)), ~ X(0) = X_0
        \end{split}
    \end{equation}
    in each iteration of Algorithm~\ref{alg:stochMPCimpl}. This way we obtain Algorithm~\ref{alg:stochMPCrisk}, where in Step~2.) an upper bound of the original problem is minimized and the performance again coincides with the abstract version of this algorithm.

    \begin{algorithm}
        \caption{risk-averse stochastic MPC algorithm}\label{alg:stochMPCrisk}
        \begin{algorithmic}
            \For{$j=0,\ldots,K$}
                \State 1.) Measure the state $x_j := X(j,\omega)$ and set $X_0\equiv x_j$.
                \State 2.) Solve the risk-averse stochastic optimal control 
                \State ~~~ problem \eqref{eq:stochOCPrisksensitive}. 
                \State 3.) Apply the MPC feedback $\mu_{\theta,N}(x_j) := U^*_{N,x_j}(0)$
                \State ~~~ to system \eqref{eq:stochSys}.
            \EndFor
        \end{algorithmic}
    \end{algorithm}

    Since the cost in \eqref{eq:stochOCPrisksensitive} does now satisfies the assumptions from \cite{Schiessl2024c} we can use the results from there to bound the performance of the MPC Algorithm~\ref{alg:stochMPCrisk}. For this, we have to make the following assumptions, where we define the distributional ball around a stationary process $\mathbf{X}^s$ with respect to a metric $d$ on the space of probability measures $\mathcal{P}(\X)$ as in Definition~\ref{defn:stochTurnpikeTypes}~(iii) as
    \begin{equation*}
        B^d_r(\mathbf{X}^s) := \{X \in \RR{\Omega,\X} \mid d(X,X^s(0)) < r \}.
    \end{equation*}
    Note that we use the notation $B^d_r(\mathbf{X}^s)$ since $d(X,X^s(0)) < r$ implies $d(X,X^s(k)) < r$ for all $k \in \N_0$ since $X^s(s) \sim X^s(k)$ holds for all $s,k \in \N_0$. 
    Similarly, we write $\ell(\mathbf{X}^s,\mathbf{U}^s)$ instead of $\ell(X^s(k),U^s(k))$ for law-invariant stage costs $\ell(X,U)$.
    Furthermore, to ensure that in addition to the stage costs $\ell(X,U)$ the costs $\ell_{\theta}(X,U)$ are finite we consider the constraint set
    \begin{equation*}
    \begin{split}
        \mathbb{X}_{\theta} := \{ X &\in \RR{\Omega, \X} \mid ~\exists~ U \in \RR{\Omega, \X} : \\
        &\vert \ell_{\theta}(X,U) \vert < \infty, ~f(X,U,W) \in \mathbb{X}_{\theta}\} \subseteq \XX.
    \end{split}
    \end{equation*}

    \begin{ass} \label{ass:performanceCDC2024}
        Consider the stochastic optimal control problem \eqref{eq:stochOCPrisksensitive} for some fixed $\theta \in \Theta$, a metric $d$ on the space of probability measures $\mathcal{P}(\X)$ and a stationary pair $(\mathbf{X}_{\theta}^s,\mathbf{U}_{\theta}^s)$ with $\vert \ell_{\theta}(\mathbf{X}_{\theta}^s,\mathbf{U}_{\theta}^s)) \vert < \infty$.
        Then, we assume that the following properties hold:
        \begin{enumerate}[(i)]
            \item There exists a set $\XX_0 \subseteq \XX_{\theta}$ and some $r > 0$ such that for the closed-loop trajectory $X_{\mu_{\theta,N}}$ generated by Algorithm~\ref{alg:stochMPCrisk} it holds that $X_{\mu_{\theta,N}}(k,X_0) \in B^d_r(\mathbf{X}_{\theta}^s) \subseteq \XX_{\theta}$ for all $k \in \N_0$, $X_0 \in \XX_0$, and $N \in \N$.
            \item The optimal control problem is optimally operated at $(\mathbf{X}_{\theta}^s,\mathbf{U}_{\theta}^s)$, 
            i.e.\ $$\liminf_{K \rightarrow \infty} \frac{1}{K} \sum_{k=0}^{K-1} \ell(X_{\mathbf{U}}(k,X_0), U(k)) - \ell(\mathbf{X}^s_{\theta},\mathbf{U}^s_{\theta}) \geq 0$$ holds for all $X_0 \in \XX$ and $\mathbf{U} \in \U^{\infty}(X_0)$.
            \item The optimal control problem has the finite and infinite distributional turnpike property from Definition~\ref{defn:stochTurnpikeTypes}~(iii) at $(\mathbf{X}_{\theta}^s,\mathbf{U}_{\theta}^s)$ on $B^d_r(\mathbf{X}_{\theta}^s)$ with respect to the metric $d$.
            \item The optimal control problem has a shifted value function, cf.\ \cite[Definition~9]{Schiessl2024c}, which is approximately distributionally continuous at $\mathbf{X}_{\theta}^s$ on finite and infinite horizons with respect to the metric $d$ according to \cite[Definition~10]{Schiessl2024c}.
        \end{enumerate}
    \end{ass}

    
    However, since these performance bounds are computed with respect to the new cost $\ell_{\theta}(X,U)$, it is not immediately clear how well the closed-loop solution of Algorithm~\ref{alg:stochMPCrisk} performs with respect to the original cost $\ell(X,U)$. So in the following we will analyze the performance of Algorithm~\ref{alg:stochMPCrisk} with respect to the original cost and show how we can achieve averaged performance optimality by making an appropriate choice of the parameter $\theta$ in problem \eqref{eq:stochOCPrisksensitive}.
    In order to this, we make the following additional assumption on the original stochastic optimal control problem \eqref{eq:stochOCP}.
    \begin{ass} \label{ass:optimallyOperated}
        The original stochastic optimal control problem \eqref{eq:stochOCP} is optimally operated at a stationary pair $(\mathbf{X}^s,\mathbf{U}^s)$.
    \end{ass}
    Now we can show the following result regarding the averaged closed-loop performance with respect to the original cost for the solution from Algorithm~\ref{alg:stochMPCrisk} 
    $\bar{J}^{cl}_{K}(X_0,\mu_{N,\theta}) :=  \frac{1}{K} J_K^{cl}(X_0,\mu_{N,\theta})$
    for arbitrary $\theta \in \Theta$ satisfying Assumption~\ref{ass:performanceCDC2024}.

    \begin{thm} \label{thm:averagedCost}
        Let the Assumptions~\ref{ass:mappingT},~\ref{ass:optimallyOperated} hold 
        and assume that
        \begin{equation} \label{eq:optimalParameter}
            \theta^s \in \argmin_{\theta \in \Theta} \ell_{\theta}(\mathbf{X}^s,\mathbf{U}^s),
        \end{equation}
        exists.
        Furthermore, consider $\theta \in \Theta$ and let Assumption~\ref{ass:performanceCDC2024} hold for this $\theta$.
        Then there exists $\delta \in \mathcal{L}$ and $\alpha \in \K_{\infty}$ such that the averaged closed-loop cost satisfies
        \begin{equation*}
        \begin{split}
            \ell(\mathbf{X}^s,\mathbf{U}^s) \leq \limsup_{K \to \infty}& \bar{J}^{cl}_{K}(X_0,\mu_{N,\theta}) \\
            \leq &\ell(\mathbf{X}^s,\mathbf{U}^s) + \delta(N) + \alpha(\Vert \theta - \theta^s \Vert)
        \end{split}
        \end{equation*}
        for all $X_0 \in \XX_0$.
    \end{thm}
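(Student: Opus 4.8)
The plan is to prove the two inequalities separately. Throughout, write $X^{cl}(k) := X_{\mu_{N,\theta}}(k,X_0)$ and $U^{cl}(k) := \mu_{N,\theta}(X^{cl}(k))$ for the closed-loop state and control produced by Algorithm~\ref{alg:stochMPCrisk}. The lower bound will follow directly from optimal operation of the \emph{original} problem (Assumption~\ref{ass:optimallyOperated}), whereas the upper bound rests on two observations: the surrogate cost $\ell_{\theta}$ dominates the true cost $\ell$ pointwise, and for the expected-value cost $\ell_{\theta}$ the closed loop admits the averaged performance estimate of \cite{Schiessl2024c} that is available under Assumption~\ref{ass:performanceCDC2024}.

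For the lower bound, I would observe that $(U^{cl}(k))_{k\ge 0}\in\U^{\infty}(X_0)$ is admissible, so Assumption~\ref{ass:optimallyOperated} applies to the closed loop and gives
\begin{equation*}
    \liminf_{K\to\infty}\sum_{k=0}^{K-1}\big(\ell(X^{cl}(k),U^{cl}(k)) - \ell(\mathbf{X}^s,\mathbf{U}^s)\big)\ge 0.
\end{equation*}
Dividing by $K$ and passing to the limit yields $\liminf_{K\to\infty}\bar{J}^{cl}_{K}(X_0,\mu_{N,\theta}) \ge \ell(\mathbf{X}^s,\mathbf{U}^s)$, and since $\limsup\ge\liminf$ the left inequality of the claim follows.

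For the upper bound, I would first use that the parametric form of Definition~\ref{defn:riskMeasureParametric} gives $\ell(X,U)=\inf_{\theta'\in\Theta}\ell_{\theta'}(X,U)\le\ell_{\theta}(X,U)$ pointwise, so $\bar{J}^{cl}_{K}(X_0,\mu_{N,\theta})$ is dominated by the averaged closed-loop cost computed with $\ell_{\theta}$. Since $\ell_{\theta}$ is of expected-value type, Theorem~\ref{thm:MPCalg} ensures the implementable closed loop of Algorithm~\ref{alg:stochMPCrisk} has the same performance as its abstract counterpart, to which the estimate of \cite{Schiessl2024c} applies under Assumption~\ref{ass:performanceCDC2024}, producing $\delta\in\mathcal{L}$ with
\begin{equation*}
    \limsup_{K\to\infty}\frac1K\sum_{k=0}^{K-1}\ell_{\theta}(X^{cl}(k),U^{cl}(k))\le\ell_{\theta}(\mathbf{X}_{\theta}^s,\mathbf{U}_{\theta}^s)+\delta(N).
\end{equation*}
It then remains to bound $\ell_{\theta}(\mathbf{X}_{\theta}^s,\mathbf{U}_{\theta}^s)$. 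The uniform-in-$z$ continuity of $\Psi$ from Assumption~\ref{ass:mappingT} furnishes $\alpha\in\K_{\infty}$ with $\vert\Psi(z,\theta)-\Psi(z,\theta^s)\vert\le\alpha(\Vert\theta-\theta^s\Vert)$ for all $z$; applying this pointwise at $z=g(\mathbf{X}^s,\mathbf{U}^s)(\omega)$, taking expectations, and using \eqref{eq:optimalParameter} together with $\ell(\mathbf{X}^s,\mathbf{U}^s)=\inf_{\theta'}\ell_{\theta'}(\mathbf{X}^s,\mathbf{U}^s)=\ell_{\theta^s}(\mathbf{X}^s,\mathbf{U}^s)$ yields
\begin{equation*}
    \ell_{\theta}(\mathbf{X}^s,\mathbf{U}^s)\le\ell(\mathbf{X}^s,\mathbf{U}^s)+\alpha(\Vert\theta-\theta^s\Vert).
\end{equation*}
In particular $\ell_{\theta}(\mathbf{X}^s,\mathbf{U}^s)<\infty$, so $(\mathbf{X}^s,\mathbf{U}^s)$ is a feasible stationary pair of the $\theta$-problem, and testing the optimal-operation property of Assumption~\ref{ass:performanceCDC2024}(ii) at $(\mathbf{X}_{\theta}^s,\mathbf{U}_{\theta}^s)$ with this stationary trajectory (whose per-stage cost is constant by law-invariance) gives $\ell_{\theta}(\mathbf{X}_{\theta}^s,\mathbf{U}_{\theta}^s)\le\ell_{\theta}(\mathbf{X}^s,\mathbf{U}^s)$. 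Chaining the three displayed bounds proves the right inequality.

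The step I expect to be most delicate is this final chain, where the optimal $\theta$-steady-state value $\ell_{\theta}(\mathbf{X}_{\theta}^s,\mathbf{U}_{\theta}^s)$ delivered by the performance estimate must be transported back to the reference value $\ell(\mathbf{X}^s,\mathbf{U}^s)$. This fuses the optimality of $(\mathbf{X}_{\theta}^s,\mathbf{U}_{\theta}^s)$ among stationary pairs with the uniform continuity of $\Psi$, and it requires checking that $(\mathbf{X}^s,\mathbf{U}^s)$ is admissible for the $\theta$-problem before invoking its optimal operation; the continuity bound is precisely what supplies the finiteness $\ell_{\theta}(\mathbf{X}^s,\mathbf{U}^s)<\infty$, so these two arguments must be carried out in the right order.
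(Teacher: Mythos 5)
Your proposal is correct and follows essentially the same route as the paper's proof: the lower bound via Assumption~\ref{ass:optimallyOperated} applied to the closed loop, and the upper bound via $\ell \leq \ell_{\theta}$, the averaged performance estimate of \cite[Theorem~20]{Schiessl2024c} under Assumption~\ref{ass:performanceCDC2024}, and the chain $\ell_{\theta}(\mathbf{X}_{\theta}^s,\mathbf{U}_{\theta}^s) \leq \ell_{\theta}(\mathbf{X}^s,\mathbf{U}^s) \leq \ell_{\theta^s}(\mathbf{X}^s,\mathbf{U}^s) + \alpha(\Vert\theta-\theta^s\Vert) = \ell(\mathbf{X}^s,\mathbf{U}^s) + \alpha(\Vert\theta-\theta^s\Vert)$ combining Assumption~\ref{ass:performanceCDC2024}~(ii), the uniform continuity of $\Psi$, and the choice of $\theta^s$. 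Your explicit check that $\ell_{\theta}(\mathbf{X}^s,\mathbf{U}^s)<\infty$ before invoking optimal operation of the $\theta$-problem is a minor point of care that the paper leaves implicit, but it does not change the argument.
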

    \begin{proof}
        By Assumption~\ref{ass:optimallyOperated} it follows immediately that 
        \begin{equation*}
            \liminf_{K \rightarrow \infty} \frac{1}{K} \sum_{k=0}^{K-1} \ell(X_{\mathbf{U}}(k,X_0), U(k)) \geq \ell(\mathbf{X}^s,\mathbf{U}^s)
        \end{equation*}
        holds for all $X_0 \in \XX_0$ and $\mathbf{U} \in \U^{\infty}(X_0)$, and thus, in particular $\ell(\mathbf{X}^s,\mathbf{U}^s) \leq \limsup_{K \to \infty} \bar{J}^{cl}_{K}(X_0,\mu_{N,\theta})$.
        Moreover, by \cite[Theorem~20]{Schiessl2024c} we know that 
        \begin{equation*}
            \begin{split}
                \limsup_{K \to \infty} \frac{1}{K} \sum_{k=0}^{K-1} \ell_{\theta}(X_{\mu}(k,X_0),\mu(X_{\mu_{N,\theta}}(k,X_0)))
                \\ \leq \ell_{\theta}(\mathbf{X}_{\theta}^s,\mathbf{U}_{\theta}^s) + \delta(N) =\ell(\mathbf{X}^s,\mathbf{U}^s) + \delta(N) + c
            \end{split}
        \end{equation*}
        holds for all $X_0 \in \XX_0$ with $c = \ell_{\theta}(\mathbf{X}_{\theta}^s,\mathbf{U}_{\theta}^s) - \ell(\mathbf{X}^s,\mathbf{U}^s)$. 
        Moreover, by the optimal operation from Assumption~\ref{ass:performanceCDC2024}~(ii) 
        we can conclude that $\ell_{\theta}(\mathbf{X}^s_{\theta},\mathbf{U}^s_{\theta}) \leq \ell_{\theta}(\mathbf{X}^s,\mathbf{U}^s)$ holds. Thus, by
        the uniform continuity of $\Psi$, cf.\ Assumption~\ref{ass:mappingT}~(ii), we get there is an $\alpha \in \K_{\infty}$ such that
        \begin{align*}
            &\ell_{\theta}(\mathbf{X}^s_{\theta},\mathbf{U}^s_{\theta}) - \ell_{\theta^s}(\mathbf{X}^s,\mathbf{U}^s) \\
            \leq& \ell_{\theta}(\mathbf{X}^s_{\theta},\mathbf{U}^s_{\theta}) - \ell_{\theta}(\mathbf{X}^s,\mathbf{U}^s) + \vert \ell_{\theta}(\mathbf{X}^s,\mathbf{U}^s) - \ell_{\theta^s}(\mathbf{X}^s,\mathbf{U}^s) \vert \\
            \leq& \Exp{\vert \Psi(g(\mathbf{X}^s,\mathbf{U}^s),\theta) - \Psi(g(\mathbf{X}^s,\mathbf{U}^s),\theta^s) \vert} 
            \leq \alpha(\Vert \theta - \theta^s\Vert)
        \end{align*}
        which shows the claim since $\ell(\mathbf{X}^s,\mathbf{U}^s) = \ell_{\theta^s}(\mathbf{X}^s,\mathbf{U}^s)$ holds due to the choice of $\theta^s$ from \eqref{eq:optimalParameter}.
    \end{proof}

    \begin{rem}
        \begin{enumerate}[(i)]
            \item In Assumption~\ref{ass:mappingT} we assume uniform continuity for all $z \in \R$. However, if we take a look at the proof of Theorem~\ref{thm:averagedCost}, we can see that it is sufficient that this condition holds for all $z \in \mbox{Im}(g(\mathbf{X}^s,\mathbf{U}^s)) \subseteq \R$. Hence, it is particularly fulfilled if $\Psi$ is continuous and the realizations of the stationary pair $(\mathbf{X}^s,\mathbf{U}^s)$ lie in a compact set.
            \item Note that Assumption~\ref{ass:performanceCDC2024}~(ii) is implied by Assumption~\ref{ass:optimallyOperated} for the optimal stationary parameter $\theta^s$ from \eqref{eq:optimalParameter}, but in general not for other values of $\theta$.
        \end{enumerate}
    \end{rem}

\section{Numerical Examples}

    Theorem~\ref{thm:averagedCost} shows that we can achieve near-optimality of averaged performance for the original cost criterion using Algorithm~\ref{alg:stochMPCrisk}, and that the distance to optimality is determined by the optimization horizon $N$ and by how well the fixed parameter $\theta$ fits the optimal stationary one $\theta^s$ from equation \eqref{eq:optimalParameter}. 
    Thus, our results show that switching from the expectation-based setting in \cite{Schiessl2024c} to the risk-averse setting considered in this paper extends the problem by a parameter-fitting component.
    In the following we will illustrate this result numerically.
    For this purpose we first consider the risk-averse setting from Example~\ref{ex:example2}. 
    
    To this end, we first have to find a good approximation of $\theta^s$ since equation~\eqref{eq:optimalParameter} is difficult to solve, both analytically and numerically.
    However, if we look at the parameters realizing the optimal costs in problem~\eqref{eq:stochOCP}, cf.\ Figure~\ref{fig:example2_parameters}, we can observe that these parameters exhibit a turnpike property analogously to the optimal states and controls.
    And since the turnpike property guarantees that we spend most of the time close to the stationary values, we can conclude that the values in the middle of the horizon should give us a good approximation of $\theta^s$ to fix the parameter $\theta$.

    \begin{figure}[t]
        \centering
        \includegraphics[width=0.3\textwidth]{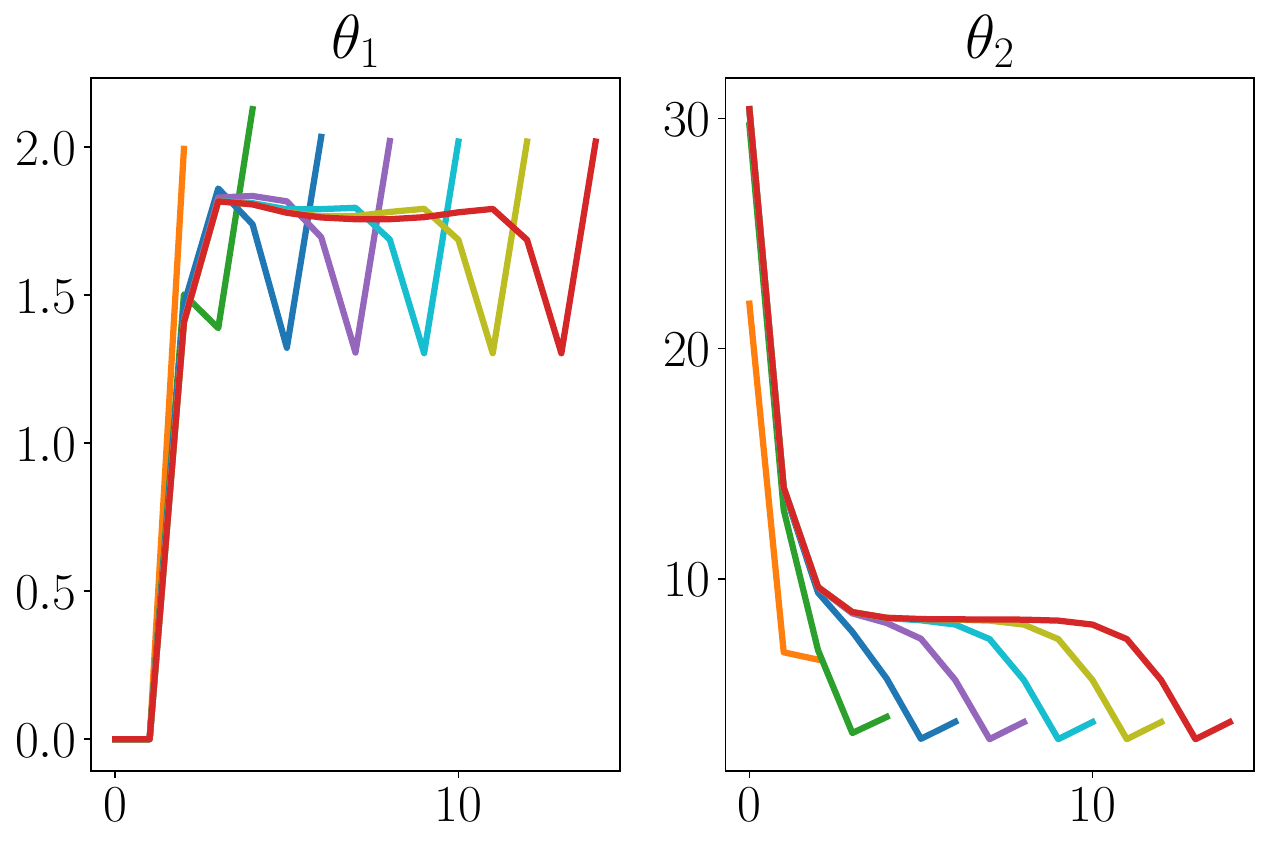}
        \caption{Parameter $\theta$ attaining the optimal cost for the optimal trajectories from Figure~\ref{fig:example2_turnpike}.}
        \label{fig:example2_parameters}
    \end{figure}
    
    The averaged performance results for different time horizons $N=6,7,8,9$ and parameter $\theta=\theta^s$ in Algorithm~\ref{alg:stochMPCrisk} are shown in Figure~\ref{fig:example2_costs}. In addition, Figure~\ref{fig:example2_costs} also shows the performance results for fixed horizon $N=9$ and different choices of the parameter $\theta = \theta^s, \theta_1, \theta_2$ with $\theta_1 = \theta^s + (1.5,1.5)$ and $\theta_2=\theta^s+(2.5,2.5)$.
    It is clearly observable that an increasing time horizon $N$ leads to a better average performance and that a deviation from the optimal stationary parameter $\theta^s$ leads to a poorer performance, which is in line with Theorem~\ref{thm:averagedCost}. 
    Moreover, we can see that the time horizon seems to have a much larger impact on the performance than the parameter $\theta$, which suggests that our approach is not too sensitive regarding an error in the parameter estimation, at least for this example.

    \begin{figure}[t]
        \centering
        \includegraphics[width=0.33\textwidth]{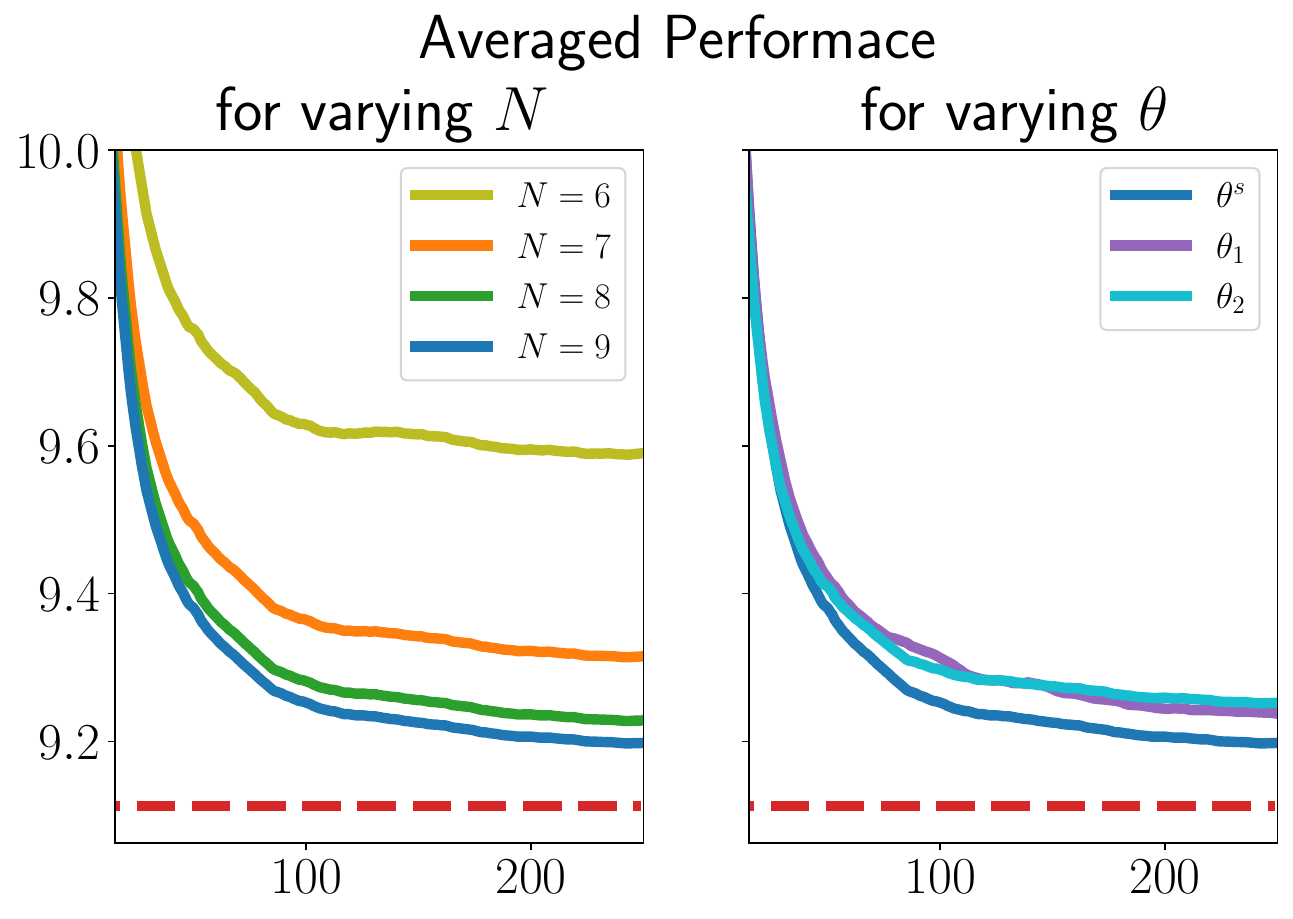}
        \caption{Optimal stationary cost $\ell(\mathbf{X}^s,\mathbf{U}^s)$ (red dashed) and averaged cost for $\theta^s$ on different horizons $N=6,7,8,9$ (left) and for horizon $N=9$ and different parameters $\theta^s,\theta_1,\theta_2$ (right) for the setting of Example~\ref{ex:example2}.}
        \label{fig:example2_costs}
    \end{figure}
    
    To further illustrate our theoretical findings, we implemented the MPC Algorithm~\ref{alg:stochMPCrisk} for the setting of Example~\ref{ex:example1}. 
    Again, we approximated the stationary optimal parameter $\theta^s$ and the corresponding optimal stationary cost by the turnpike. The obtained averaged performance results for varying horizons $N=6,7,8,9$ with fixed parameter $\theta = \theta^s$ and varying $\theta = \theta^s,\theta_1,\theta_2$ with $\theta_1 = 0.75 \theta^s$, $\theta_2=0.5 \theta^s$ and fixed $N=9$ are shown in Figure~\ref{fig:example1_costs}.
    We can observe that an increasing horizon $N$ leads to a better performance and a deviation from the parameter $\theta^s$ worsens the performance. 
    However, in contrast to Figure~\ref{fig:example2_costs}, we can see that for a too large deviation from the optimal parameter $\theta^s$ the performance significantly deteriorates. This shows that the parameter estimation is not negligible, although the sensitivity with respect to the parameter is again not too large.


\section{Conclusion} \label{sec:conclusions}
    We presented a risk-averse stochastic MPC algorithm with near-optimal averaged performance guarantees. 
    Our results are based on the observation that for parameterized risk measures, we can transfer results for stochastic MPC schemes with expected costs by fixing a certain parameter in the cost formulation. 
    The upper bound on the averaged performance then depends on how well this parameter matches an optimal one.
    Further research should focus on analyzing the non-averaged performance and stability properties of the proposed algorithm. 
    Furthermore, a useful extension for our algorithm would be to develop an adaptivity such that the fixed parameter $\theta$ is tuned automatically during the MPC loop and does not need to be calculated offline in advance.

    \begin{figure}[t]
        \centering
        \includegraphics[width=0.32\textwidth]{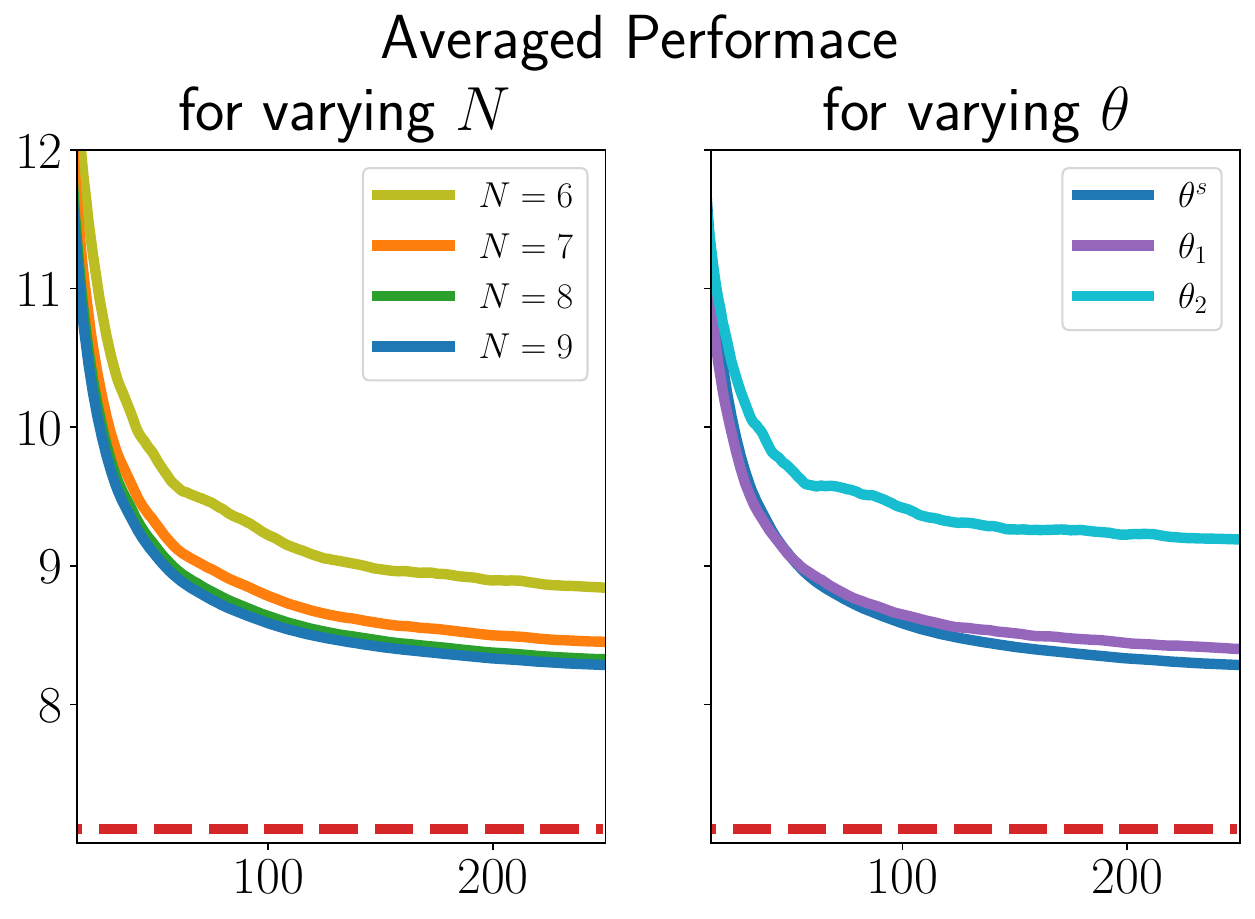}
        \caption{Optimal stationary cost $\ell(\mathbf{X}^s,\mathbf{U}^s)$ (red dashed) and averaged cost for $\theta^s$ on different horizons $N=6,7,8,9$ (left) and for horizon $N=9$ and different parameters $\theta^s,\theta_1,\theta_2$ (right) for the setting of Example~\ref{ex:example1}.}
        \label{fig:example1_costs}
    \end{figure}

{\bibliographystyle{abbrv} 
  \bibliography{references} 
}

\end{document}